\newtheorem{theorem}{Theorem}
\newtheorem{lemma}{Lemma}
\newtheorem{corollary}[theorem]{Corollary}
\theoremstyle{definition}
\theoremstyle{remark}
\newtheorem{remark}[theorem]{Remark}
\def\Z{\mathbb{Z}}
\def\R{\mathbb{R}}
\title{New identities for a sum of products of the Kummer functions}
\author{S.I.\:Kalmykov$^{\rm a,b}$ and D.B.\:Karp$^{\rm a,b}$\footnote{Corresponding author.  S.I.Kalmykov's email: \emph{sergeykalmykov@inbox.ru}  D.B.\:Karp's email: \emph{dimkrp@gmail.com}}
\\[10pt]
\small{\textit{$\phantom{1}^a$Far Eastern Federal University, Vladivostok, Russia}}\\\small{\textit{$\phantom{1}^b$Institute of Applied Mathematics, FEBRAS}}
}
\date{}
\begin{document}
\maketitle

\begin{abstract}
Recently, Feng, Kuznetsov and Yang discovered a very general reduction formula for a sum of products of the generalized hypergeometric functions  (J. \:Math. \:Anal. \:Appl. \:443(2016), 116--122). The main goal of this note is to present a generalization of a particular case of their identity when the generalized hypergeometric function is reduced to the Kummer function ${}_1F_1$. Our generalized formula contains an additional integer shift in the bottom parameter of the Kummer function.
The key ingredient of the proof is a summation formula for the Clausen series ${}_3F_{2}(1)$ with two integral parameter differences of opposite sign.
In the ultimate section of the paper we prove another formula for a particular product difference of the Kummer functions in terms of a linear combination of these functions.
\end{abstract}

\bigskip

Keywords: \emph{Kummer function, Clausen function, hypergeometric identity, summation theorem, duality relations for hypergeometric functions}

\bigskip

MSC2010: 33C15, 33C20

\bigskip

\paragraph{1. Introduction.} The history of transformation and reduction formulas for product sums of hypergeometric functions can be traced back to Euler, as his celebrated
identity
$$
{}_2F_1\!\left(\!\begin{array}{c}a, b\\c\end{array}\!\!\vline\,\,x\right)=(1-x)^{c-a-b}{}_2F_1\!\left(\!\begin{array}{c}c-a,c-b\\c\end{array}\!\!\vline\,\,x\right)
$$
immediately leads to the reduction formula
\begin{multline*}
{}_2F_1\!\left(\!\begin{array}{c}a, b\\c\end{array}\!\!\vline\,\,x\right){}_2F_1\!\left(\!\begin{array}{c}1-a, 1-b\\2-c\end{array}\!\!\vline\,\,x\right)
\\
-{}_2F_1\!\left(\!\begin{array}{c}c-a, c-b\\c\end{array}\!\!\vline\,\,x\right){}_2F_1\!\left(\!\begin{array}{c}1+a-c, 1+b-c\\2-c\end{array}\!\!\vline\,\,x\right)=0.
\end{multline*}
Throughout the paper the standard notation ${}_pF_{q}$ will be used to denote the generalized hypergeometric functions \cite[(2.1.2)]{AAR}. Later on, Gauss found the relation \cite[(6)]{Nesterenko1995}
\begin{multline*}
{}_2F_1\!\left(\!\begin{array}{c}a, b\\c\end{array}\!\!\vline\,\,x\right){}_2F_1\!\left(\!\begin{array}{c}-a, -b\\-c\end{array}\!\!\vline\,\,x\right)
\\
-x^2\frac{ab(c-a)(c-b)}{c^2(c^2-1)}{}_2F_1\!\left(\!\begin{array}{c}1-a,1-b\\2-c\end{array}\!\!\vline\,\,x\right){}_2F_1\!\left(\!\begin{array}{c}1+a,1+b, \\2+c\end{array}\!\!\vline\,\,x\right)=1
\end{multline*}
around the same time  when Legendre discovered an identity for a linear combination of three products of the complete elliptic integrals $K$ with complimentary arguments ($K$ is a particular cases of  the Gauss function ${}_2F_1$).  Legendre's formula was generalized by Elliott in 1904 to a linear combination of three products of the Gauss hypergeometric functions containing three independent parameters. Another two-parameter generalization of Legendre's identity was found in \cite{AQVV} which was soon thereafter generalized in \cite{BNPV} to an identity with four independent parameters covering both Elliott's formula and the formula from \cite{AQVV}.  Another identity for a linear combination of three products of the Gauss functions with coefficients depending quadratically on the argument and containing three independent parameters was given in \cite[Theorem~3.2]{GKLZ}.

An independent development started in 1931 with  a paper by Darling \cite{Darling} followed by Bailey's work \cite{Bailey} who found another method for proving Darling's identity and its generalizations. The identities discovered by Darling are probably the first reduction formulas for linear combinations of products of Clausen's  hypergeometric function ${}_3F_2$ (sometimes also called  Thomae's hypergeometric function).  It was probably Nesterenko who first obtained a reduction formula for a linear combination of products of generalized hypergeometric functions in his work on Hermite-Pad\'{e}  approximation \cite[Theorem~5]{Nesterenko1995}. Probably due to complicated notation his work remained largely unnoticed. It was referred to, however, in Gorelov's paper \cite{Gorelov2010}, where the author discovered another extension of Darling's formulas to general ${}_pF_{q}$ \cite[Corollary~1]{Gorelov2010}.

The years 2015-2016 marked an important breakthrough in the subject: Beukers and Jouhet published the paper \cite{BeukersJouhet}, where they presented a very general identity involving derivatives up to certain order of the generalized hypergeometric function ${}_{p+1}F_{p}$ and gave a purely algebraic proof based on the theory of general differential and difference modules.  Soon thereafter Feng, Kuznetsov and Young \cite{FKY} found another identity for the general ${}_pF_{q}$ function, involving more free parameters than Buekers-Jouhet formula and coinciding with it if the order of derivatives is set to zero. This particular case of Buekers-Jouhet's formula also coincides with Gorelov's formula \cite[(4)]{Gorelov2010} (for $p<q+1$ one also has to apply confluence to derive Gorelov's formula from that of  Buekers-Jouhet). The proof of Feng, Kuznetsov and Young is entirely different from that of the previous authors and is based on the so-called non-local derangement identity.  The ${}_1F_{1}$ particular case of their formula \cite[Theorem~1]{FKY} reads
\begin{multline*}
\frac{(1+a_1-b_1)_m}{a_1-a_2}{}_1F_1\!\left(\!\begin{array}{c}1+a_1-b_1+m\\1+a_1-a_2\end{array}\!\!\vline\,\,x\right)
{}_1F_1\!\left(\!\begin{array}{c}b_1-a_1\\1+a_2-a_1\end{array}\!\!\vline\,\,-x\right)
\\
+\frac{(1+a_2-b_1)_m}{a_2-a_1}{}_1F_1\!\left(\!\begin{array}{c}1+a_2-b_1+m\\1+a_2-a_1\end{array}\!\!\vline\,\,x\right)
{}_1F_1\!\left(\!\begin{array}{c}b_1-a_2\\1+a_1-a_2\end{array}\!\!\vline\,\,-x\right)=P_r(x),
\end{multline*}
where $P_r(x)$ is a polynomial of degree $|m|-1$, $m$ is an integer number.  Setting $\alpha=b_1-a_1$, $\gamma=1+a_2-a_1$, $t=-x$ and applying the Kummer transformation ${}_1F_1(a;b;x)=e^{x}{}_1F_1(b-a;b;-x)$ this formula may be cast into the form
\begin{multline*}
(1-\alpha)_m{}_1F_1\!\left(\!\begin{array}{c}\alpha\\\gamma\end{array}\!\!\vline\,\,t\right)
{}_1F_1\!\left(\!\begin{array}{c}1-\alpha+m\\2-\gamma\end{array}\!\!\vline\,\,-t\right)
\\
-(\gamma-\alpha)_{m}{}_1F_1\!\left(\!\begin{array}{c}\alpha-m\\\gamma\end{array}\!\!\vline\,\,t\right)
{}_1F_1\!\left(\!\begin{array}{c}1-\alpha\\2-\gamma\end{array}\!\!\vline\,\,-t\right)=P_r(t).
\end{multline*}
The main purpose of this note is to generalize the last formula by introducing the second integer shift into the bottom parameter.  The main result in presented in Theorem~\ref{th:1F1general}.  To prove this formula we needed a transformation formula for terminating Thomae-Clausen series ${}_3F_{2}(1)$ presented in Corollary~\ref{cr:3F2transformation}.  This transformation follows from a possibly new summation formula for non-terminating ${}_3F_{2}(1)$ given in Theorem~\ref{th:3F2transform}.
This formula treats the case when one top parameter is less than one bottom parameter by a nonnegative integer, while another top parameter is greater than the other bottom parameters by a (different) positive integer.  A number of closely related summation formulas attracted a lot of attention over last decade as witnessed  by numerous publications on this subject of which we only mention \cite{KimRathie,MillerParis,MillerSrivastava,ShpotSrivastava} and references therein.  In the ultimate section of the paper we present a linearization formula for another product difference of the Kummer function motivated by our previous research in log-concavity of series in inverse gamma functions \cite{KalmykovKarpITSF}.  Finally, let us mention that a similar generalization can be constructed for the general case of Feng, Kuznetsov and Young identity \cite[Theorem~1]{FKY}, which is our work in progress.

\paragraph{2. Main results.}
We will use the self-explanatory notation $\Z_{>0}$, $\Z_{\ge0}$ and $\Z_{<0}$ for the corresponding subsets of integers $\Z$ and $\R$ to denote the set of real numbers.
Throughout the paper the Pochhammer symbol $(a)_k$ is defined for both positive and negative $k$ as $\Gamma(a+k)/\Gamma(a)$, so that for $k\in\Z_{<0}$, $(a)_{k}=(-1)^{k}/(1-a)_{-k}$. Our first theorem is a summation formula for ${_{3}F_2}(1)$ which may be of independent interest.
\begin{theorem}\label{th:3F2transform}
Suppose $l\in\Z_{\ge0}$, $r\in\Z$ and $c<r-l$ if $c\in\R$ or $c\le{r-l}$ if $-c\in\Z_{\ge0}$.  Then for arbitrary $a,b\in\R$ the following identity holds true\emph{:}
\begin{multline}\label{eq:3F2transform}
{_{3}F_2}\left(\left.\!\!\!\begin{array}{c}a,b-r,c\\a-l,b+1\end{array}\right|1\!\right)
\\
=\frac{(-1)^l\Gamma(1-c)\Gamma(1+b)(a-c-l)_r(1-b)_r}{\Gamma(1+b-c)(1-a)_l(a-b)_{r-l}r!}
{_{3}F_2}\left(\left.\!\!\!\begin{array}{c}-r,1-a,c-b\\1-b,1-a+c+l-r\end{array}\right|1\!\right).
\end{multline}
In particular, both sides are zero if $r\in\Z_{<0}$.
\end{theorem}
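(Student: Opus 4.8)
The plan is to resolve the two integral parameter differences one at a time, treating the pair $(a,a-l)$ first. Since $a$ exceeds $a-l$ by the nonnegative integer $l$, the ratio $(a)_k/(a-l)_k=(a-l+k)_l/(a-l)_l$ is a polynomial of degree $l$ in $k$, and I would expand it in the factorial basis $\{(-k)_j\}_{j=0}^{l}$ via
$$
\frac{(a)_k}{(a-l)_k}=\sum_{j=0}^{l}\frac{(-1)^j\binom{l}{j}}{(a-l)_j}\,(-k)_j .
$$
Substituting this into the defining series of the left-hand ${}_3F_2$ and using $(-k)_j(c)_k/k!=(-1)^j(c)_k/(k-j)!$ together with the shift $k\mapsto k+j$ turns the left-hand side into a finite linear combination, over $j=0,\dots,l$, of Gauss series ${}_2F_1(b-r+j,c+j;b+1+j;1)$.

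The second step is to sum each of these ${}_2F_1(1)$'s. The hypothesis $c<r-l$ (or $c\le r-l$ when $-c\in\Z_{\ge0}$) guarantees $1+r-c-j>0$ for every $j\le l$, so the Gauss summation theorem applies to each inner series and yields a ratio of gamma functions. After collecting the resulting factors (in particular $(b-r)_j(c)_j/(b+1)_j$ combines with the Gauss output, while $\Gamma(1+r-c-j)$ turns into $(-1)^j\Gamma(1+r-c)/(c-r)_j$), the whole left-hand side collapses to
$$
\frac{\Gamma(1+b)\,\Gamma(1+r-c)}{\Gamma(1+r)\,\Gamma(1+b-c)}\;{}_3F_2\!\left(\left.\!\!\!\begin{array}{c}-l,\,b-r,\,c\\a-l,\,c-r\end{array}\right|1\!\right),
$$
a single terminating ${}_3F_2(1)$ with $l+1$ terms multiplied by an explicit prefactor. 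This already disposes of the degenerate case: when $r\in\Z_{<0}$ the factor $1/\Gamma(1+r)=1/r!$ vanishes, so the left-hand side is $0$, matching the right-hand side whose prefactor also carries $1/r!$.

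It remains to match the terminating ${}_3F_2$ just obtained against the terminating ${}_3F_2(-r,\dots)$ of the theorem, whose length is $r+1$ rather than $l+1$. The key observation is that the series ${}_3F_2(-l,b-r,c;a-l,c-r;1)$ hides a second integral difference, since its top parameter $c$ exceeds its bottom parameter $c-r$ by the integer $r$; this is exactly what permits rewriting it as a series of length $r+1$. In the base case $l=0$ the reduced series is trivially equal to $1$, and the asserted identity becomes precisely the Pfaff--Saalschütz summation of the right-hand ${}_3F_2(1)$ (which is Saalschützian for $l=0$); thus for $l=0$ the theorem is just Gauss on the left and Pfaff--Saalschütz on the right. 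For general $l$ I would pass from the $(-l)$-terminating form to the $(-r)$-terminating form by the classical reversal and Thomae transformations for terminating ${}_3F_2(1)$ (equivalently, by induction on $l$ anchored at the Pfaff--Saalschütz base case), carefully tracking the accompanying Pochhammer and gamma factors so that they assemble into the prefactor $(-1)^l\Gamma(1-c)\Gamma(1+b)(a-c-l)_r(1-b)_r/[\Gamma(1+b-c)(1-a)_l(a-b)_{r-l}r!]$. This last identification, together with the bookkeeping of all the prefactors, is where I expect the real work to lie: the reduction to a single terminating series is routine, but converting between the two terminating representations and verifying that the constants coincide is the delicate part.
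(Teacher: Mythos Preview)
Your reduction of the left-hand side to
\[
\frac{\Gamma(1+b)\,\Gamma(1+r-c)}{\Gamma(1+r)\,\Gamma(1+b-c)}\;{}_3F_2\!\left(\left.\!\!\!\begin{array}{c}-l,\,b-r,\,c\\a-l,\,c-r\end{array}\right|1\!\right)
\]
is correct, and the argument up to that point is a legitimate alternative to the paper's: both proofs collapse the two integral parameter differences one at a time and sum the inner series in closed form. The paper, however, does it in the opposite order. It first expands the pair $(b-r,b+1)$ by Karlsson's identity into a sum of $r+1$ terms, each of which is then evaluated by the Karlsson--Minton formula (handling the $(a,a-l)$ pair). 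This lands directly on a ${}_3F_2$ terminating at $-r$, and a single standard Whipple-type transformation of terminating ${}_3F_2(1)$ then gives the stated right-hand side. Your route expands $(a,a-l)$ first and uses Gauss for the inner sums, landing on a series terminating at $-l$ instead.

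That last discrepancy is where your plan gets soft. The tools you name for the final step---``reversal and Thomae transformations''---will not by themselves convert a ${}_3F_2$ with $l+1$ terms into one with $r+1$ terms: the Whipple/Thomae group acting on terminating ${}_3F_2(1)$ preserves the terminating parameter, hence the length of the sum. Your $l=0$ check is fine (the right-hand side is indeed Saalsch\"utzian there), and an induction on $l$ via contiguous relations can be made to work, but that amounts to proving a separate lemma rather than citing a classical transformation. The paper's ordering avoids this issue entirely by producing the correct termination length from the outset, which is the main practical advantage of its approach.
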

\begin{proof} For $r<0$ the claim follows by $n=2$ case of \cite[(12)]{Karlsson1971}. Hence, in what follows we assume $r\ge0$.
We start with Karlsson-Minton summation formula (see \cite[(10)]{Karlsson1971} or \cite[(4.2)]{MillerSrivastava})
\begin{equation}\label{eq:KarlMinton}
{_{3}F_2}\left(\left.\!\!\!\begin{array}{c}\beta,\alpha+l,\gamma\\\beta+1,\alpha\end{array}\right|1\!\right)
=\frac{\Gamma(1-\gamma)\Gamma(\beta+1)(\alpha-\beta)_l}{\Gamma(1+\beta-\gamma)(\alpha)_l}
\end{equation}
valid for $\gamma<1-l$, where $l$ is a nonnegative integer, and another summation formula due to Karlsson \cite[(6)]{Karlsson1974}:
\begin{equation}\label{eq:Karlsson74}
{_{3}F_2}\left(\left.\!\!\!\begin{array}{c}\beta,\delta,\gamma\\\beta+r+1,\eta\end{array}\right|1\!\right)
=\frac{(\beta)_{r+1}}{r!}\sum\limits_{j=0}^{r}\frac{(-r)_j}{(\beta+j)j!}
{_{3}F_2}\left(\left.\!\!\!\begin{array}{c}\beta+j,\delta,\gamma\\\beta+j+1,\eta\end{array}\right|1\!\right).
\end{equation}
Combining these two formulas we get
\begin{multline}\label{eq:7.4.4.14}
{_{3}F_2}\left(\left.\!\!\!\begin{array}{c}\beta,\alpha+l,\gamma\\\beta+r+1,\alpha\end{array}\right|1\!\right)
=\frac{(\beta)_{r+1}}{r!}\sum\limits_{j=0}^{r}\frac{(-r)_j}{(\beta+j)j!}
{_{3}F_2}\left(\left.\!\!\!\begin{array}{c}\beta+j,\alpha+l,\gamma\\\beta+j+1,\alpha\end{array}\right|1\!\right)
\\
=\frac{(\beta)_{r+1}}{r!}\sum\limits_{j=0}^{r}\frac{(-r)_j}{(\beta+j)j!}
\frac{\Gamma(1-\gamma)\Gamma(\beta+j+1)(\alpha-\beta-j)_l}{\Gamma(1+\beta+j-\gamma)(\alpha)_l}
\\
=\frac{(\beta)_{r+1}(\alpha-\beta)_l\Gamma(1-\gamma)\Gamma(\beta)}{(\alpha)_l\Gamma(1+\beta-\gamma)r!}
{_{3}F_2}\left(\left.\!\!\!\begin{array}{c}-r,\beta,1+\beta-\alpha\\1+\beta-\gamma,1+\beta-\alpha-l\end{array}\right|1\!\right).
\end{multline}
Next, we apply the following transformation formula for terminating ${}_3F_2(1)$ \cite[Appendix, (IV)]{RJRJR}:
$$
{_{3}F_2}\left(\left.\!\!\!\begin{array}{c}-r,A,B\\D,E\end{array}\right|1\!\right)=\frac{(D-A)_r(B)_r}{(D)_r(E)_r}
{_{3}F_2}\left(\left.\!\!\!\begin{array}{c}-r,E-B,1-D-r\\1-B-r,1+A-D-r\end{array}\right|1\!\right).
$$
Setting $A=1+\beta-\alpha$, $B=\beta$, $D=1+\beta-\gamma$ and $E=1+\beta-\alpha-l$ yields
\begin{multline*}
{_{3}F_2}\left(\left.\!\!\!\begin{array}{c}-r,\beta,1+\beta-\alpha\\1+\beta-\gamma,1+\beta-\alpha-l\end{array}\right|1\!\right)
\\
=\frac{(\alpha-\gamma)_r(\beta)_r}{(1+\beta-\gamma)_r(1+\beta-\alpha-l)_r}
{_{3}F_2}\left(\left.\!\!\!\begin{array}{c}-r,1-\alpha-l,\gamma-\beta-r\\1-\beta-r,1-\alpha+\gamma-r\end{array}\right|1\!\right)
\end{multline*}
and, on substituting,
\begin{multline*}
{_{3}F_2}\left(\left.\!\!\!\begin{array}{c}\beta,\alpha+l,\gamma\\\beta+r+1,\alpha\end{array}\right|1\!\right)
\\
=\frac{(-1)^l(\beta)_{r+1}\Gamma(1-\gamma)\Gamma(\beta+r)(\alpha-\gamma)_r}{(\alpha)_l\Gamma(1+\beta-\gamma+r)(1+\beta-\alpha)_{r-l}r!}
{_{3}F_2}\left(\left.\!\!\!\begin{array}{c}-r,1-\alpha-l,\gamma-\beta-r\\1-\beta-r,1-\alpha+\gamma-r\end{array}\right|1\!\right).
\end{multline*}
Finally, replacing $\alpha+l\to{a}$, $\beta+r\to{b}$, $\gamma\to{c}$ we arrive at (\ref{eq:3F2transform}) after some elementary manipulations with gamma functions and Pochhammer symbols.
\end{proof}

\begin{remark}
Formula (\ref{eq:7.4.4.14})  is also given in \cite[(7.4.4.14)]{PBM3}.  As we could not trace its proof in
the literature we decided to give a complete derivation here.
\end{remark}

\begin{remark}
Certainly, formula (\ref{eq:3F2transform}) also holds for complex values of $a$, $b$ and $c$ by analytic continuation.
\end{remark}

\begin{corollary}\label{cr:3F2transformation}
If $m_1,m_2\in\Z$ and $\Z_{\ge0}\ni{n}\ge\max(m_1,m_2-m_1)$, then for arbitrary real $\alpha,\gamma$
\begin{multline}\label{eq:3F2transformation}
\frac{(1-\gamma)_{m_2}(\gamma-\alpha)_{m_1-m_2}(\alpha-m_1)_{n}}{(\gamma-m_2)_nn!}
{_{3}F_2}\left(\left.\!\!\!\begin{array}{c}-n,1-\alpha,1-\gamma+m_2-n\!\\2-\gamma,1-\alpha+m_1-n\end{array}\right|1\!\right)
\\
=\frac{(-1)^{n-m_2}(1-\alpha)_{n+m_1-m_2}}{(2-\gamma)_n(n-m_2)!}
{_{3}F_2}\left(\left.\!\!\!\begin{array}{c}-n+m_2,\alpha,\gamma-1-n\!\\\gamma,\alpha+m_2-m_1-n\end{array}\right|1\!\right).
\end{multline}
In particular, both sides vanish if $n<m_2$.
\end{corollary}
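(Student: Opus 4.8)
The plan is to obtain Corollary~\ref{cr:3F2transformation} as a direct specialization of Theorem~\ref{th:3F2transform}. First I would substitute, in (\ref{eq:3F2transform}),
\[
r=n,\quad a=\alpha,\quad b=\gamma-1,\quad c=m_2-n,\quad l=n+m_1-m_2 .
\]
A routine check of the six parameter slots shows that the series on the left of (\ref{eq:3F2transform}) becomes ${}_3F_2$ with top row $\alpha,\gamma-1-n,m_2-n$ and bottom row $\alpha+m_2-m_1-n,\gamma$, i.e.\ precisely the series on the right of (\ref{eq:3F2transformation}), while the series on the right of (\ref{eq:3F2transform}) turns into the series on the left of (\ref{eq:3F2transformation}). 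Thus, under this substitution, Theorem~\ref{th:3F2transform} already has the shape ``[right series of~(\ref{eq:3F2transformation})]$\,=P\cdot$[left series of~(\ref{eq:3F2transformation})]'' for an explicit gamma--Pochhammer coefficient $P$; multiplying through by the coefficient $B$ of the right series then has to reproduce the coefficient $A$ of the left series. Before anything else I would verify the hypotheses: $l=n+m_1-m_2\ge0$ is $n\ge m_2-m_1$, $r=n\ge0$ is automatic, and, when $n\ge m_2$ so that $-c=n-m_2\in\Z_{\ge0}$, the admissible condition $c\le r-l$ reads $m_2-n\le m_2-m_1$, i.e.\ $n\ge m_1$; all of these are guaranteed by $n\ge\max(m_1,m_2-m_1)$.

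Collecting the factors of (\ref{eq:3F2transform}) under the substitution gives
\[
P=\frac{(-1)^{n+m_1-m_2}\Gamma(1+n-m_2)\Gamma(\gamma)(\alpha-m_1)_n(2-\gamma)_n}{\Gamma(\gamma-m_2+n)(1-\alpha)_{n+m_1-m_2}(1-\gamma+\alpha)_{m_2-m_1}\,n!},
\]
and the whole matter reduces to the scalar identity $A=BP$, where $A,B$ denote the coefficients standing in front of the two ${}_3F_2$'s in (\ref{eq:3F2transformation}). Cancelling the common symbols $(\alpha-m_1)_n$, $(2-\gamma)_n$, $(1-\alpha)_{n+m_1-m_2}$ and $n!$, and using $(\gamma-m_2)_n=\Gamma(\gamma-m_2+n)/\Gamma(\gamma-m_2)$, this collapses to the two elementary reflection identities
\[
(\gamma-\alpha)_{m_1-m_2}(1-\gamma+\alpha)_{m_2-m_1}=(-1)^{m_1-m_2},\qquad (1-\gamma)_{m_2}\,\Gamma(\gamma-m_2)=(-1)^{m_2}\Gamma(\gamma),
\]
the first being the Pochhammer reflection $(x)_{-k}=(-1)^k/(1-x)_k$ and the second the reflection formula for the gamma function. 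Carrying the signs through, the coefficient sign reduces to $(-1)^{(n+m_1-m_2)+(n-m_2)}=(-1)^{m_1}$, and the reflections reproduce it as $(-1)^{m_1-m_2}(-1)^{m_2}=(-1)^{m_1}$, so $A=BP$ and (\ref{eq:3F2transformation}) is proved for all $n\ge m_2$, where both series terminate and every gamma factor is regular.

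The remaining and genuinely delicate range is $\max(m_1,m_2-m_1)\le n<m_2$, which is exactly where the ``both sides vanish'' clause operates. Here $c=m_2-n$ is a \emph{positive} integer, so one must invoke the theorem through its generic branch $c<r-l$ (the equality case $n=m_1$, occurring only when $2m_1\ge m_2$, being reached by continuity in $\gamma$), and the coefficient $B$ carries the factor $1/(n-m_2)!=1/\Gamma(1+n-m_2)=0$, so the right-hand side of (\ref{eq:3F2transformation}) vanishes. For the left-hand side I would argue that the terminating ${}_3F_2$ there vanishes as well: in this range $P$ acquires the gamma pole $\Gamma(1+n-m_2)$ while its companion ``[right series of~(\ref{eq:3F2transformation})]'' stays a convergent ${}_3F_2(1)$ (its parameter excess equals $n-m_1+1>0$), so the relation [right series]$=P\cdot$[left series] forces the left series to be $0$. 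I expect this forced-vanishing step to be the main obstacle, since it must be made rigorous despite the pole in $P$; the cleanest ways are a limiting argument in a continuous parameter, or a direct Karlsson--Minton-type reduction of the terminating left series, whose integral parameter difference $m_1-n\le0$ produces the required zero. The prefactor bookkeeping of the previous paragraph, though lengthy, is entirely routine by comparison.
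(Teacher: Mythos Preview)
Your substitution is \emph{different} from the paper's: you take $r=n$, $a=\alpha$, $b=\gamma-1$, $c=m_2-n$, $l=n+m_1-m_2$, which maps the left side of Theorem~\ref{th:3F2transform} to the \emph{right} series of~(\ref{eq:3F2transformation}) and vice versa. The paper instead sets $a=1-\alpha$, $b=1-\gamma$, $c=-n$, $r=n-m_2$, $l=n-m_1$, which sends left to left and right to right. For $n\ge m_2$ your route is fine and the Pochhammer bookkeeping checks out.

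The difference matters precisely in the range $\max(m_1,m_2-m_1)\le n<m_2$ that you yourself flag as ``the main obstacle''. With the paper's choice one has $-c=n\in\Z_{\ge0}$ for every $n$, so Theorem~\ref{th:3F2transform} applies uniformly under its $c\le r-l$ branch, and the vanishing for $n<m_2$ is simply the theorem's built-in clause ``both sides are zero if $r<0$'' (here $r=n-m_2$). No limiting argument, no separate Karlsson--Minton reduction, no case $n=m_1$ to worry about: two Pochhammer identities and you are done. Your substitution instead forces $c=m_2-n$ to be a positive integer in this range, so $\Gamma(1-c)$ is a genuine pole and the theorem does not apply literally; the forced-vanishing argument you sketch is salvageable, but the continuity has to be taken in $c$ (treated as a free real parameter approaching the integer value), not ``in~$\gamma$'' as you write, since the obstruction $c<r-l$ is independent of $\gamma$. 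The borderline $n=m_1$ (where $c=r-l$) then still converges because the parametric excess equals~$1$, and the terminating series in fact collapses to a ${}_2F_1$ (the parameters $1-\alpha$ cancel) whose Chu--Vandermonde value $(1+m_1-m_2)_{m_1}/(2-\gamma)_{m_1}$ vanishes since $1-m_1\le 1+m_1-m_2\le0$.

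So your plan can be completed, but only after fixing the limiting variable and handling $n=m_1$ separately; the paper's substitution sidesteps all of this at no cost.
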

\begin{proof}
The claim follows from formula (\ref{eq:3F2transform}) on setting $\alpha=1-a$, $\gamma=1-b$, $n=-c$, $r=n-m_2$, $l=n-m_1$ and applying the easily verifiable identities
$$
\frac{(\gamma-m_2)_n}{(1-\gamma)_{m_2}}=(\gamma)_{n-m_2}(-1)^{m_2}
$$
and
$$
\frac{(1-\alpha)_{n+m_1-m_2}}{(\alpha-m_1)_{n}}=\frac{(-1)^{m_1}(1-\alpha+m_1)_{n-m_2}}{(\alpha)_{n-m_1}}.
$$
\end{proof}

Our main result in the following statement.

\begin{theorem}\label{th:1F1general}
Set $(x)_{+}=\max(0,x)$. For $m_1,m_2\in\mathbb{Z}$, $m_1^2+m_2^2>0$, the following identity holds
\begin{multline}\label{eq:1F1general}
\frac{(1-\alpha)_{m_1}t^{(m_2)_{+}}}{(2-\gamma)_{m_2}}
{}_{1}F_{1}\!\left(\!\begin{array}{l}\alpha\\\gamma\end{array}\!\!\vline\,\,t\right)
{}_{1}F_{1}\!\left(\!\begin{array}{l}1-\alpha+m_1\\2-\gamma+m_2\end{array}\!\!\vline\,\,-t\right)
\\
-(1-\gamma)_{m_2}(\gamma-\alpha)_{m_1-m_2}t^{(-m_2)_{+}}
{}_{1}F_{1}\!\left(\!\begin{array}{l}1-\alpha\\2-\gamma\end{array}\!\!\vline\,\,-t\right)
{}_{1}F_{1}\!\left(\!\begin{array}{l}\alpha-m_1\\\gamma-m_2\end{array}\!\!\vline\,\,t\right)
=P_{r}(t),
\end{multline}
where $P_r(t)$ is a polynomial of degree $r\le\max(|m_1|,|m_2-m_1|)-1$.  This polynomial is calculated by taking the first $r$ terms of the expression on the
left hand side.
\end{theorem}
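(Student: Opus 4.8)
The plan is to use the fact that both products of Kummer functions on the left-hand side of \eqref{eq:1F1general} are entire functions of $t$, so that the whole combination is entire; to prove it is a polynomial of degree at most $\max(|m_1|,|m_2-m_1|)-1$ it then suffices to show that every Taylor coefficient of order $N\ge\max(|m_1|,|m_2-m_1|)$ vanishes, the polynomial $P_r$ being the truncation of the left-hand side to lower orders (which is precisely the assertion that $P_r$ is obtained by taking the first $r$ terms). Before computing coefficients I would reduce to the case $m_2\ge0$. Indeed, the substitution $\alpha\mapsto\alpha-m_1$, $\gamma\mapsto\gamma-m_2$, $m_1\mapsto-m_1$, $m_2\mapsto-m_2$ interchanges the two products (after using $(1-\alpha+m_1)_{-m_1}=1/(1-\alpha)_{m_1}$ and the analogous identities for the remaining Pochhammer prefactors) and carries the entire left-hand side into a constant multiple of itself with the two terms swapped. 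Since $\max(|m_1|,|m_2-m_1|)$ is invariant under $(m_1,m_2)\mapsto(-m_1,-m_2)$, this symmetry permits assuming $m_2\ge0$ without loss of generality.

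Assuming $m_2\ge0$, so that $(m_2)_{+}=m_2$ and $(-m_2)_{+}=0$, I would Cauchy-multiply the two ${}_1F_1$ series in each product. Rewriting the resulting terminating inner sum by means of $(a)_{n-j}/(a)_n=(-1)^j/(1-a-n)_j$ and $(n-j)!=(-1)^j n!/(-n)_j$ expresses the coefficient of $t^n$ in the second product as
$$
\frac{(\alpha-m_1)_n}{(\gamma-m_2)_n\,n!}\;{}_{3}F_2\!\left(\left.\!\!\begin{array}{c}-n,\,1-\alpha,\,1-\gamma+m_2-n\\2-\gamma,\,1-\alpha+m_1-n\end{array}\right|1\right),
$$
which, multiplied by $(1-\gamma)_{m_2}(\gamma-\alpha)_{m_1-m_2}$, is exactly the left-hand side of \eqref{eq:3F2transformation}. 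Carrying out the same computation on the first product yields the coefficient of $t^{\,n-m_2}$ (this being the power which, after multiplication by $t^{m_2}$, feeds into $t^n$) as a terminating ${}_3F_2(1)$ with upper parameter $-(n-m_2)=m_2-n$; reversing the order of summation in this terminating series and collecting the gamma factors turns it, after multiplication by $(1-\alpha)_{m_1}/(2-\gamma)_{m_2}$, into exactly the right-hand side of \eqref{eq:3F2transformation}.

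With these two identifications in hand, Corollary~\ref{cr:3F2transformation} asserts precisely that the contributions of the first and second terms to the coefficient of $t^n$ in \eqref{eq:1F1general} coincide for every $n\ge\max(m_1,m_2-m_1)$, so that they cancel and the coefficient vanishes there. Because $m_2\ge0$ forces $\max(m_1,m_2-m_1)=\max(|m_1|,|m_2-m_1|)$ in each possible sign pattern of $m_1$ and $m_2-m_1$ (their sum $m_2$ being nonnegative, they cannot both be negative), this gives the stated bound $r\le\max(|m_1|,|m_2-m_1|)-1$. The clause of Corollary~\ref{cr:3F2transformation} stating that both sides vanish when $n<m_2$ handles the low-order bookkeeping: for such $n$ the first term carries no power of $t$ below $t^{m_2}$, while the second term's coefficient vanishes, so no inconsistency arises. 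I expect the only genuine obstacle to lie in the second step, namely performing the summation-order reversal of the terminating ${}_3F_2$ coming from the first product and executing the gamma-function and Pochhammer-symbol bookkeeping so that the prefactor $(1-\alpha)_{m_1}/(2-\gamma)_{m_2}$ together with the index shift supplied by $t^{m_2}$ align exactly with the right-hand side of \eqref{eq:3F2transformation}. Everything else is either a direct series manipulation or an immediate appeal to Corollary~\ref{cr:3F2transformation}.
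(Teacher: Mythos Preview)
Your proposal is correct and follows essentially the same route as the paper: Cauchy-multiply each product, rewrite the terminating inner sums as ${}_3F_2(1)$'s via $(a)_{n-k}=(-1)^k(a)_n/(1-a-n)_k$, and identify the two resulting expressions with the two sides of Corollary~\ref{cr:3F2transformation}. Two small remarks: first, the paper handles $m_2<0$ by redoing the computation with $m_1'=-m_1$, $m_2'=-m_2$ and then invoking the Corollary with $\alpha\to1-\alpha$, $\gamma\to2-\gamma$, which is exactly your symmetry reduction written out explicitly; second, the ``reversal of summation'' you flag as the only genuine obstacle is in fact unnecessary---carrying out the same $(a)_{N-k}$ manipulation on the first product directly produces the ${}_3F_2$ with top parameter $m_2-n$ and the prefactor $(-1)^{n-m_2}(1-\alpha)_{n+m_1-m_2}/[(2-\gamma)_n(n-m_2)!]$, which is already the right-hand side of \eqref{eq:3F2transformation}.
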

\begin{remark}
Another way to express the conclusion of the above theorem is: the coefficients at $t^n$ in the expression on the left hand side of (\ref{eq:1F1general}) vanish for all $n\ge\max(|m_1|,|m_2-m_1|)$.
\end{remark}

\begin{proof}
The claim will be established once we show that the coefficients at $t^n$ in the two terms on the left hand side coincide for $n\ge\max(|m_1|,|m_2-m_1|)$.  These coefficients are
\begin{equation*}
L(n):=\frac{(1-\alpha)_{m_1}}{(2-\gamma)_{m_2}}\sum_{k=0}^{n-(m_2)_+}\frac{(\alpha)_k(1-\alpha+m_1)_{n-(m_2)_+-k}(-1)^{n-(m_2)_+-k}}{(\gamma)_k(2-\gamma+m_2)_{n-(m_2)_+-k}k!(n-(m_2)_+-k)!},
\end{equation*}
and
\begin{multline*}
R(n):=(1-\gamma)_{m_2}(\gamma-\alpha)_{m_1-m_2}
\\
\times\sum_{k=0}^{n-(-m_2)_+} \frac{(1-\alpha)_k(\alpha-m_1)_{n-(-m_2)_+-k}(-1)^k}{(2-\gamma)_k(\gamma-m_2)_{n-(-m_2)_+-k}k! (n-(-m_2)_+-k)!}.
\end{multline*}
Assume first that $m_2\ge0$. Then using $(c)_{n-k}=(-1)^k(c)_{n}/(1-c-n)_k$ we obtain
$$
L(n)=\frac{(-1)^{n-m_2}(1-\alpha)_{n+m_1-m_2}}{(2-\gamma)_n(n-m_2)!}
{_{3}F_2}\left(\left.\!\!\!\begin{array}{c}-n+m_2,\alpha,\gamma-1-n\!\\\gamma,\alpha+m_2-m_1-n\end{array}\right|1\!\right),
$$
$$
R(n)=\frac{(1-\gamma)_{m_2}(\gamma-\alpha)_{m_1-m_2}(\alpha-m_1)_{n}}{(\gamma-m_2)_nn!}
{_{3}F_2}\left(\left.\!\!\!\begin{array}{c}-n,1-\alpha,1-\gamma+m_2-n\!\\2-\gamma,1-\alpha+m_1-n\end{array}\right|1\!\right).
$$
Then, by Corollary~\ref{cr:3F2transformation} $L(n)=R(n)$ for $n\ge\max(m_1,m_2-m_1)$.

Next, take $m_2<0$.  Writing $m_2'=-m_2$ and $m_1'=-m_1$ we obtain:
\begin{multline*}
L(n)=\frac{(1-\alpha)_{-m_1'}}{(2-\gamma)_{-m_2'}}\sum_{k=0}^{n}\frac{(\alpha)_k(1-\alpha-m_1')_{n-k}(-1)^{n-k}}{(\gamma)_k(2-\gamma-m_2')_{n-k}k!(n-k)!}
\\
=\frac{(-1)^{m_1'-m_2'+n}(\gamma-1)_{m_2'}(1-\alpha-m_1')_{n}}{(\alpha)_{m_1'}(2-\gamma-m_2')_{n}n!}
\sum_{k=0}^{n}\frac{(-n)_k(\alpha)_k(\gamma-1+m_2'-n)_{k}}{(\alpha+m_1'-n)_{k}(\gamma)_kk!}
\\
=\frac{(-1)^{m_1'-m_2'+n}(\gamma-1)_{m_2'}(1-\alpha-m_1')_{n}}{(\alpha)_{m_1'}(2-\gamma-m_2')_{n}n!}
{_{3}F_2}\left(\left.\!\!\!\begin{array}{c}-n,\alpha,\gamma-1+m_2'-n\!\\\gamma,\alpha+m_1'-n\end{array}\right|1\!\right),
\end{multline*}
and
\begin{multline*}
R(n)=(1-\gamma)_{-m_2'}(\gamma-\alpha)_{m_2'-m_1'}
\\
\times\sum_{k=0}^{n-m_2'} \frac{(1-\alpha)_k(\alpha+m_1')_{n-m_2'-k}(-1)^k}{(2-\gamma)_k(\gamma+m_2')_{n-m_2'-k}k!(n-m_2'-k)!}
\\
=\frac{(-1)^{m_1'}(\alpha+m_1')_{n-m_2'}}{(1-\gamma+\alpha)_{m_1'-m_2'}(\gamma)_{m_2'}(\gamma+m_2')_{n-m_2'}(n-m_2')!}
\\
\times\sum_{k=0}^{n-m_2'} \frac{(-n+m_2')_k(1-\alpha)_k(1-\gamma-n)_{k}}{(2-\gamma)_k(1-\alpha+m_2'-m_1'-n)_{k}k!}
\\
=\frac{(-1)^{m_1'}(\alpha+m_1')_{n-m_2'}}{(1-\gamma+\alpha)_{m_1'-m_2'}(\gamma)_{m_2'}(\gamma+m_2')_{n-m_2'}(n-m_2')!}
\\
\times{_{3}F_2}\left(\left.\!\!\!\begin{array}{c}-n+m_2',1-\alpha,1-\gamma-n\!\\2-\gamma,1-\alpha+m_2-m_1-n\end{array}\right|1\!\right).
\end{multline*}
The equality $R(n)=L(n)$ is now equivalent to formula (\ref{eq:3F2transformation}) with the following replacements: $m_1'\to{m_1}$, $m_2'\to{m_2}$,
$\alpha\to1-\alpha$, $\gamma\to2-\gamma$.  Hence, $R(n)=L(n)$ is true for $n\ge\max(m_1',m_2'-m_1')=\max(-m_1,m_1-m_2)$.  Combining this condition with
$n\ge\max(m_1,m_2-m_1)$ we see that $R(n)=L(n)$ for all $n\ge\max(|m_1|,|m_2-m_1|)$ which completes the proof of the theorem.
\end{proof}

The following identities exemplify Theorem~\ref{th:1F1general}.

\medskip

\textbf{Example~1.} Take $m_1=m_2=1$. Then $r=0$ and $P_{0}(t)=$ the constant term on the left hand side of (\ref{eq:1F1general}). This yields
\begin{multline} \label{eq:polid1}
\frac{(\alpha-1)t}{(\gamma-1)(\gamma-2)}
{}_{1}F_{1}\!\left(\!\begin{array}{l}\alpha\\\gamma\end{array}\!\!\vline\,\,t\right)
{}_{1}F_{1}\!\left(\!\begin{array}{l}2-\alpha\\3-\gamma\end{array}\!\!\vline\,\,-t\right)
\\
+{}_{1}F_{1}\!\left(\!\begin{array}{l}1-\alpha\\2-\gamma\end{array}\!\!\vline\,\,-t\right)
{}_{1}F_{1}\!\left(\!\begin{array}{l}\alpha-1\\\gamma-1\end{array}\!\!\vline\,\,t\right)=1.
\end{multline}

\medskip

\textbf{Example~2.}
Take $m_1=0$, $m_2=-1$. Then $r=0$ and $P_{0}(t)=$ the constant term on the left hand side of (\ref{eq:1F1general}). This yields
\begin{multline} \label{eq:polid2}
{}_{1}F_{1}\!\left(\!\begin{array}{l}\alpha\\\gamma\end{array}\!\!\vline\,\,t\right)
{}_{1}F_{1}\!\left(\!\begin{array}{l}1-\alpha\\1-\gamma\end{array}\!\!\vline\,\,-t\right)
-\frac{(\gamma-\alpha)t}{\gamma(\gamma-1)}{}_{1}F_{1}\!\left(\!\begin{array}{l}1-\alpha\\2-\gamma\end{array}\!\!\vline\,\,-t\right)
{}_{1}F_{1}\!\left(\!\begin{array}{l}\alpha\\\gamma+1\end{array}\!\!\vline\,\,t\right)=1.
\end{multline}

\medskip

\textbf{Example~3.}
Take $m_1=1$, $m_2=2$. Then $r=0$ and $P_{0}(t)=$ the constant term on the left hand side of (\ref{eq:1F1general}). This yields
\begin{multline} \label{eq:polid3}
\frac{(1-\alpha)(\gamma-\alpha-1)t^2}{(\gamma-2)(1-\gamma)_{3}}{}_{1}F_{1}\!\left(\!\begin{array}{l}\alpha\\\gamma\end{array}\!\!\vline\,\,t\right)
{}_{1}F_{1}\!\left(\!\begin{array}{l}2-\alpha\\4-\gamma\end{array}\!\!\vline\,\,-t\right)
\\
+{}_{1}F_{1}\!\left(\!\begin{array}{l}1-\alpha\\2-\gamma\end{array}\!\!\vline\,\,-t\right)
{}_{1}F_{1}\!\left(\!\begin{array}{l}\alpha-1\\\gamma-2\end{array}\!\!\vline\,\,t\right)=1.
\end{multline}

\textbf{Example~4.}
Take $m_1=1$, $m_2=0$. Then $r=0$ and $P_{0}(t)=$ the constant term on the left hand side of (\ref{eq:1F1general}). This yields
\begin{multline} \label{eq:polid4}
\frac{1-\alpha}{1-\gamma}{}_{1}F_{1}\!\left(\!\begin{array}{l}\alpha\\\gamma\end{array}\!\!\vline\,\,t\right)
{}_{1}F_{1}\!\left(\!\begin{array}{l}2-\alpha\\2-\gamma\end{array}\!\!\vline\,\,-t\right)
\\
-\frac{\gamma-\alpha}{1-\gamma}{}_{1}F_{1}\!\left(\!\begin{array}{l}1-\alpha\\2-\gamma\end{array}\!\!\vline\,\,-t\right)
{}_{1}F_{1}\!\left(\!\begin{array}{l}\alpha-1\\\gamma\end{array}\!\!\vline\,\,t\right)=1.
\end{multline}

\textbf{Example~5.}
Take $m_1=-1$, $m_2=1$. Then $r=1$ and $P_{1}(t)=$ the constant term plus the linear term on the left hand side of (\ref{eq:1F1general}). This yields
\begin{multline} \label{eq:polid5}
\frac{(\gamma-\alpha-2)_{2}t}{(1-\gamma)_{2}}{}_{1}F_{1}\!\left(\!\begin{array}{l}\alpha\\\gamma\end{array}\!\!\vline\,\,t\right)
{}_{1}F_{1}\!\left(\!\begin{array}{l}-\alpha\\3-\gamma\end{array}\!\!\vline\,\,-t\right)
\\
+\alpha{}_{1}F_{1}\!\left(\!\begin{array}{l}1-\alpha\\2-\gamma\end{array}\!\!\vline\,\,-t\right)
{}_{1}F_{1}\!\left(\!\begin{array}{l}\alpha+1\\\gamma-1\end{array}\!\!\vline\,\,t\right)=\alpha+t.
\end{multline}

\textbf{Example~6.}
Take $m_1=2$, $m_2=4$. Then $r=1$ and $P_{1}(t)=$ the constant term plus the linear term on the left hand side of (\ref{eq:1F1general}). This yields
\begin{multline} \label{eq:polid6}
\frac{(\gamma-\alpha-2)_{2}(1-\alpha)_{2}t^4}{(1-\gamma)_5}{}_{1}F_{1}\!\left(\!\begin{array}{l}\alpha\\\gamma\end{array}\!\!\vline\,\,t\right)
{}_{1}F_{1}\!\left(\!\begin{array}{l}3-\alpha\\6-\gamma\end{array}\!\!\vline\,\,-t\right)
\\
-(2-\gamma)_{3}
{}_{1}F_{1}\!\left(\!\begin{array}{l}1-\alpha\\2-\gamma\end{array}\!\!\vline\,\,-t\right)
{}_{1}F_{1}\!\left(\!\begin{array}{l}\alpha-2\\\gamma-4\end{array}\!\!\vline\,\,t\right)=(\gamma-4)_{3}+(2\alpha-\gamma)(\gamma-3)t.
\end{multline}

\bigskip

\paragraph{3. A linearization identity.}
In our paper \cite{KalmykovKarpITSF}, while studying log-concavity of general power series with respect to a parameter contained in the argument of the reciprocal
gamma function, we proved the following lemma.
\begin{lemma}\label{lm:recgamma}
{\rm (\cite[Lemma 2.3]{KalmykovKarpITSF})} Let $m\in\Z_{\ge0}$. Then for all complex $\beta$ and $\mu$
\begin{multline}\label{eq:recgamma}
\sum_{k=0}^{m} \left(\frac{1}{\Gamma(k+\mu+1)\Gamma(m-k+\mu+\beta)}-\frac{1}{\Gamma(k+\mu)\Gamma(m-k+\mu+\beta+1)}\right)
\\
=\frac{(\mu+\beta)_{m+1}-(\mu)_{m+1}}{\Gamma(\mu+m+1)\Gamma(\mu+\beta+m+1)}.
\end{multline}
\end{lemma}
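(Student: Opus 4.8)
The plan is to recognize the summand as a perfect telescoping difference, after which the identity reduces to evaluating two boundary terms. First I would introduce the single sequence
$$
u_k=\frac{1}{\Gamma(k+\mu)\Gamma(m-k+\mu+\beta+1)},
$$
and observe that the two fractions inside the $k$-th summand of (\ref{eq:recgamma}) are precisely two consecutive values of this sequence. Indeed, the subtracted fraction $1/[\Gamma(k+\mu)\Gamma(m-k+\mu+\beta+1)]$ is exactly $u_k$, while the leading fraction $1/[\Gamma(k+\mu+1)\Gamma(m-k+\mu+\beta)]$ equals $u_{k+1}$, since $\Gamma(k+1+\mu)=\Gamma(k+\mu+1)$ and $\Gamma(m-(k+1)+\mu+\beta+1)=\Gamma(m-k+\mu+\beta)$. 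Hence each summand equals $u_{k+1}-u_k$.

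With this observation the entire sum collapses by telescoping:
$$
\sum_{k=0}^{m}(u_{k+1}-u_k)=u_{m+1}-u_0.
$$
It then remains only to compute the two boundary values, which is immediate:
$$
u_{m+1}=\frac{1}{\Gamma(m+\mu+1)\Gamma(\mu+\beta)},\qquad u_0=\frac{1}{\Gamma(\mu)\Gamma(m+\mu+\beta+1)}.
$$

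Finally I would rewrite these gamma quotients using the Pochhammer identities $(\mu)_{m+1}=\Gamma(\mu+m+1)/\Gamma(\mu)$ and $(\mu+\beta)_{m+1}=\Gamma(\mu+\beta+m+1)/\Gamma(\mu+\beta)$. Placing $u_{m+1}-u_0$ over the common denominator $\Gamma(\mu+m+1)\Gamma(\mu+\beta+m+1)$ converts the first term into $(\mu+\beta)_{m+1}$ and the second into $(\mu)_{m+1}$ in the numerator, so the difference becomes exactly $(\mu+\beta)_{m+1}-(\mu)_{m+1}$, matching the right-hand side of (\ref{eq:recgamma}). Since each factor $1/\Gamma$ is entire, every step is an identity between entire functions of $\beta$ and $\mu$, and the conclusion holds for all complex $\beta,\mu$ as stated.

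As for difficulty, there is essentially no analytic obstacle: the only thing one must \emph{see} is that the leading fraction at index $k$ coincides with the subtracted fraction at index $k+1$, i.e. that the summand telescopes against the single sequence $u_k$. Once this indexing observation is made, the rest is routine gamma-function bookkeeping.
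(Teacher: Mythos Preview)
Your telescoping argument is correct: the summand is exactly $u_{k+1}-u_k$ for $u_k=1/[\Gamma(k+\mu)\Gamma(m-k+\mu+\beta+1)]$, the sum collapses to $u_{m+1}-u_0$, and the final Pochhammer rewriting is accurate. Note that the paper does not actually supply a proof of this lemma; it is merely quoted from \cite[Lemma~2.3]{KalmykovKarpITSF}, so there is no in-paper argument to compare against. Your proof is a clean, self-contained derivation that could stand in for the omitted one.
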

Computing the generating functions on both sides leads to the next statement.
\begin{theorem}\label{th:linearization}
For all complex values of the parameters and the argument the following identity holds\emph{:}
\begin{multline}\label{eq:1F1}
(\mu+\beta){}_1F_1(1;\mu+1;x) {}_1F_1(1;\mu+\beta;x)-\mu{}_1F_1(1;\mu;x) {}_1F_1(1;\mu+1+\beta;x)
\\
=(\mu+\beta){}_1F_1(1;\mu+1;x)- \mu {}_1F_1(1;\mu+1+\beta;x).
\end{multline}
\end{theorem}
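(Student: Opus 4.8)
The plan is to prove the identity by matching Taylor coefficients in $x$ on the two sides, which is what the phrase ``computing the generating functions'' refers to. The starting observation is that, because $(1)_n=n!$, the Kummer function with unit numerator parameter has the especially simple expansion ${}_1F_1(1;c;x)=\sum_{n\ge0}x^n/(c)_n=\Gamma(c)\sum_{n\ge0}x^n/\Gamma(c+n)$. First I would form the two Cauchy products on the left-hand side of (\ref{eq:1F1}) and read off the coefficient of $x^m$ in each: the product ${}_1F_1(1;\mu+1;x){}_1F_1(1;\mu+\beta;x)$ contributes $\Gamma(\mu+1)\Gamma(\mu+\beta)\sum_{k=0}^{m}1/[\Gamma(k+\mu+1)\Gamma(m-k+\mu+\beta)]$, and ${}_1F_1(1;\mu;x){}_1F_1(1;\mu+1+\beta;x)$ contributes the analogous sum with the shifted gamma arguments.

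The key algebraic simplification is that the scalar prefactors $(\mu+\beta)$ and $\mu$ can be absorbed into the gamma functions via $(\mu+\beta)\Gamma(\mu+\beta)=\Gamma(\mu+\beta+1)$ and $\mu\Gamma(\mu)=\Gamma(\mu+1)$. After this, \emph{both} product terms acquire the common prefactor $\Gamma(\mu+1)\Gamma(\mu+\beta+1)$, so the coefficient of $x^m$ on the left-hand side becomes $\Gamma(\mu+1)\Gamma(\mu+\beta+1)$ times exactly the inner difference
\[
\sum_{k=0}^{m}\left(\frac{1}{\Gamma(k+\mu+1)\Gamma(m-k+\mu+\beta)}-\frac{1}{\Gamma(k+\mu)\Gamma(m-k+\mu+\beta+1)}\right).
\]
This is verbatim the left-hand side of Lemma~\ref{lm:recgamma}, so I may replace it by the closed form $((\mu+\beta)_{m+1}-(\mu)_{m+1})/[\Gamma(\mu+m+1)\Gamma(\mu+\beta+m+1)]$.

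It then remains to resum. Splitting the telescoped numerator into its two pieces and using $(\mu+\beta)_{m+1}=\Gamma(\mu+\beta+m+1)/\Gamma(\mu+\beta)$ together with $(\mu)_{m+1}=\Gamma(\mu+m+1)/\Gamma(\mu)$ cancels one gamma factor in each summand, collapsing each series back into a single Kummer function: the first piece yields $(\mu+\beta){}_1F_1(1;\mu+1;x)$ and the second yields $-\mu\,{}_1F_1(1;\mu+1+\beta;x)$, which is precisely the right-hand side of (\ref{eq:1F1}). Since every series involved is entire in $x$ and the equality holds coefficientwise, it holds for all complex $x$; as the parameters enter only through ratios of gamma functions, the conclusion extends to all complex $\mu,\beta$ by analytic continuation. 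I do not anticipate any genuine obstacle, since the whole argument is bookkeeping with gamma functions and the one nontrivial input — the cancellation producing the telescoped numerator — is supplied ready-made by Lemma~\ref{lm:recgamma}. The only point requiring care is tracking the unit shifts in the gamma arguments so that the inner coefficient coincides with the lemma exactly rather than merely up to an index shift.
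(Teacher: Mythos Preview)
Your proposal is correct and follows essentially the same approach as the paper: both arguments amount to multiplying the identity in Lemma~\ref{lm:recgamma} by $x^m$, summing over $m\ge0$, recognizing the Cauchy products on the left as products of Kummer functions with unit numerator parameter, and simplifying the right-hand side back into single ${}_1F_1$ series. The only cosmetic difference is direction---the paper starts from the lemma and derives (\ref{eq:1F1}), while you start from (\ref{eq:1F1}) and reduce coefficientwise to the lemma---but the computations and the key input are identical.
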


\begin{proof}
Multiply both sides of  (\ref{eq:recgamma}) by $x^m$  and sum over $m\in\Z_{\ge0}$. The the left hand side takes the form
\begin{multline*}
\sum_{m=0}^{\infty}\frac{x^m}{\Gamma(m+\mu+1)}\sum_{m=0}^{\infty} \frac{x^m}{\Gamma(m+\mu+\beta)}- \sum_{m=0}^{\infty}\frac{x^m}{\Gamma(m+\mu)}\sum_{m=0}^{\infty} \frac{x^m}{\Gamma(m+\mu+1+\beta)}
\\
=\sum_{m=0}^{\infty}\frac{m!}{\Gamma(\mu+1)(\mu+1)_m}\frac{x^m}{m!}\sum_{m=0}^{\infty} \frac{m!}{\Gamma(\mu+\beta)(\mu+\beta)_m}\frac{x^m}{m!} \\- \sum_{m=0}^{\infty}\frac{m!}{\Gamma(\mu)(\mu)_m}\frac{x^m}{m!}\sum_{m=0}^{\infty} \frac{m!}{\Gamma(\mu+1+\beta)(\mu+1+\beta)_m}\frac{x^m}{m!}\\
=\frac{1}{\Gamma(\mu+1)\Gamma(\mu+\beta)}{}_1F_1(1;\mu+1;x){}_1F_1(1;\mu+\beta;x) \\-\frac{1}{\Gamma(\mu)\Gamma(\mu+1+\beta)}{}_1F_1(1;\mu;x) {}_1F_1(1;\mu+1+\beta;x).
\end{multline*}
Concerning the right hand side, we have
\begin{multline*}
\sum_{m=0}^{\infty} \frac{(\mu+\beta)_{m+1} x^m}{\Gamma(\mu+m+1)\Gamma(\mu+m+1+\beta)}-\sum_{m=0}^{\infty} \frac{(\mu)_{m+1} x^m}{\Gamma(\mu+m+1)\Gamma(\mu+m+1+\beta)} \\
=
\frac{1}{\Gamma(\mu+\beta)\Gamma(\mu+1)}\sum_{m=0}^{\infty} \frac{m!}{(\mu+1)_m} \frac{x^m}{m!}-\frac{1}{\Gamma(\mu)\Gamma(\mu+1+\beta)}\sum_{m=0}^{\infty} \frac{m!}{(\mu+1+\beta)_m}\frac{x^m}{m!}
\\
= \frac{1}{\Gamma(\mu+\beta)\Gamma(\mu+1)} {}_1F_1(1;\mu+1;x)-\frac{1}{\Gamma(\mu)\Gamma(\mu+1+\beta)} {}_1F_1(1;\mu+1+\beta;x).
\end{multline*}
Multiplying both sides by $\Gamma(\mu+1)\Gamma(\mu+1+\beta)$ we get (\ref{eq:1F1}).
\end{proof}
Another identity for a similar product difference of the Kummer functions has been found in \cite[Lemma~7]{KalmykovKarpJMAA}.

\medskip

\paragraph{4. Acknowledgements.}
We thank Professor Yurii A. Brychkov and Professor Richard B. Paris for useful communication regarding formula (\ref{eq:7.4.4.14}).
This research  has been supported by the Russian Science Foundation under project 14-11-00022.


\end{document}